\title{Regularity and Uniqueness of $P$-harmonic Maps  with  Small Range }
\author{Ali  Fardoun and Rachid Regbaoui}
\address{Laboratoire de Math\'ematiques, UMR 6205 CNRS\\ 
Universit\'e de Bretagne Occidentale\\ 
6 Avenue Le Gorgeu, 29238 Brest Cedex 3 \\ 
France}
\date{} 
\begin{document}

\begin{abstract}
We prove the uniqueness of solutions to Dirichlet problem for $p$-harmonic maps with images in a small geodesic ball  of the target manifold. As a consequence, we show  that such maps have  H\"older continuous derivatives. This gives an extension of a result by S. Hildebrandt {\it et al} \cite{sH77}  concerning harmonic maps. \end{abstract} 
\maketitle
\section{Introduction}

Let $(M, g)$ and $(N , h)$ be  compact Riemannian manifolds with dimensions  $m \ge 2$ and $n \ge 2$ respectively. If $p \ge 2$, \  the  $p$-energy of a map $u \in C^1(M, N)$ is defined by 
$$E_p(u) = {1\over p} \int_M  |d u|^p  dg  \  , $$
where $dg$ denotes the volume element of the metric $g$, and 
for each  $x \in M$,   $du(x) : T_xM \to T_{u(x)}N$ is the differential of $u$ at $x$. Here the norm  $|du(x)|$  of the differential of $u$  is given by 
$$|du(x)|^2 = g^{\alpha\beta}(x)h_{ij}(u(x)){\partial u^i\over \partial x_{\alpha}}(x){\partial u^j\over \partial x_{\beta}}(x) ,$$
 where $(g^{\alpha\beta})$ is  the inverse matrix of the metric $g$ in local coordinates $(x_1 , \cdots , x_m)$ on $M$, and  in local coordinates on N$, (u^1, \cdots, u^n )$ are the components of $u$, and $(h_{ij})$  is the matrix of the metric $h$. 
 
\medskip

$P$-harmonic maps are critical points of the functional $E_p$. If $u$ is a smooth $p$-harmonic map, say  \ $C^2$,  then it satisfies the following system of partial differential   equations 
\hypertarget{1.1}{}$$ -\hbox{div}\left(| d u|^{p-2}\nabla u^j\right)   =  | d u|^{p-2}\Gamma^j_{k l}(u) \nabla u^k\nabla u^l \  ,  \     j = 1 , \cdots , n  ,  \eqno (1.1) $$

\noindent  where \     $\Gamma^j_{k l}$ are the Christoffel symbols of the metric $h$. Here  \text{div}  and $\nabla$  denote respectively  the divergence  and the gradient with  respect to the metric $g$ on $M$. In particular, the notation \  $\nabla u^k\nabla u^l $ stands for the Riemannian inner product of $\nabla u^k$ and $\nabla u^l$ with respect to $g$, that's, 
$$\nabla u^k\nabla u^l  =   g^{\alpha \beta}{\partial u^k \over \partial x_{\alpha}}{\partial u^l \over \partial x_{\beta}} \  .  $$

\medskip

The variational  approach to the existence of $p$-harmonic maps  imposes a larger  functions space than $C^1(M, N)$ for the $p$-energy functional.   In order to give a more precise definition  of  critical points of the functional $E_p$, let us recall some  definitions.  Without loss of generality, we may suppose that $(N, h)$ is isometrically embedded in some Euclidean space $\mathbb{R}^{k }$. Then  for any $p \ge 2$, we define the Sobolev space  
$$W^{1 , p} (M , N)  :=  \Bigl\{ \ u  \in  W^{1 , p}(M , \mathbb{R}^k ) \  :  \   u(x) \in N  \  \hbox{a.e}
 \  \Bigr\}  . $$ 
The $p$-energy can be extended to maps $u = (u^1 , \cdots , u^k)\in W^{1 , p} (M , N)$  by 
$$E_p(u) = {1\over p} \int_M  |\nabla u|^p  dg  \  ,  $$
where \ 
$\displaystyle |\nabla u |^p = \left(\sum_{j=1}^{k}|\nabla u^j |^2\right)^{p\over 2},$ and   $\nabla u^j$ is the gradient of $u^j$ with respect to the metric $g$ (and its norm  
$|\nabla u^j |$  is also taken with respect to $g$).  

\bigskip

$P$-harmonic maps are critical points $ u \in W^{1 , p} (M , N)$ of the functional $E_p$ with respect  to  variations of the form
$$\Pi_N (u + t\varphi ) \  , $$ 
where  \  $t \in (-\varepsilon , \varepsilon)$  with $\varepsilon > 0$,  $\varphi \in C^{\infty}_0(M , \mathbb{R}^k ) $  and \  $\Pi_N$ is the nearest point projection from a tubular neigborhood of $N$ in $\mathbb{R}^k$  onto  $N$.  Then we get  the Euler-Lagrange system (in the distributional sense):

\hypertarget{1.2}{}$$-\hbox{div}\left(|\nabla u|^{p-2}\nabla u\right) = |\nabla u |^{p-2} A(u)(\nabla u , \nabla u ) \ ,     \eqno (1.2) $$

\medskip

\noindent where \ $A(u) $ \ is the second fundamental form of $N$ at $u$, and  we are using the notation :
$$A(u)(\nabla u , \nabla u ) :=  g^{\alpha \beta}A(u)\Big( {\partial u \over \partial x_{\alpha}} ,  {\partial u \over  \partial x_{\beta}} \Big)  \ .$$

\medskip

\noindent If in addition  one allows variations of the form  $u\circ \varphi_t$, where $\left(\ \varphi_t \right)_{t \in (-\varepsilon , \varepsilon )}$ is a family of  smooth transformations of $M$ such that $\varphi_0 = I_M $, one gets the so-called stationary $p$-harmonic maps. A map $u  \in   W^{1 , p} (M , N) $ is a minimizing $p$-harmonic map on a domain  $\Omega \subset M$  if, for any $w \in W^{1 , p} (\Omega , N)$ agreeing with $u$ on $\partial\Omega$, we have 
$$\int_{\Omega} |\nabla u|^p \  dg \le  \int_{\Omega} |\nabla w|^p \  dg.  $$
It is not difficult to see that minimizing maps are stationary $p$-harmonic maps. 

\medskip

The difference between system of equations \hyperlink{1.1}{(1.1)} and system \hyperlink{1.2}{(1.2)} is that  in  \hyperlink{1.1}{(1.1)} one needs that  the image of the solution $u$  lie (locally) in a single system  of coordinates  on the target manifold $N$,  which is not necessary concerning  system \hyperlink{1.2}{(1.2)}. However,  one can easily see  that for  $p$-harmonic maps whose images  lie in a single local chart of $N$,   systems 
 \hyperlink{1.1}{(1.1)} and \hyperlink{1.2}{(1.2)} are equivalent.  In particular,  this is the case for continuous $p$-harmonic maps. 
 
\medskip

Solutions of  \hyperlink{1.2}{(1.2)} have to be understood in the weak sense, that's, a map $u \in W^{1, p}(M, N)$ is a solution of  \hyperlink{1.2}{(1.2)} if for any $\varphi \in W^{1, p}_0(M , \mathbb{R}^k) \cap L^{\infty}(M, \mathbb{R}^k)$, we have 
$$\int_M |\nabla u|^{p-2} \nabla u \cdot \nabla \varphi \ dg = \int_M |\nabla u|^{p-2}A(u)(\nabla u , \nabla u) \cdot \varphi \ dg \ ,  $$
where \ the dot denotes the usual inner product in $\mathbb{R}^k$ and the notation \ $\nabla u\cdot \nabla \varphi$ means
$$\nabla u\cdot \nabla \varphi = g^{\alpha \beta}{\partial u \over \partial x_{\alpha}}\cdot {\partial \varphi \over \partial x_{\beta}} \  .$$

System \hyperlink{1.2}{(1.2)} presents two principal difficulties. The first one is that it is quasilinear and degenerate (due to the presence of $|\nabla u|^{p-2}$  in the left hand side). The second difficulty comes from the nonlinear term $|\nabla u |^{p-2}A(u)(\nabla u , \nabla u)$ caused by the geometry of $N$. One has to distinguish the case $p= 2$, which corresponds to harmonic maps, from  the other ones. In fact,  when $p= 2$ we have an elliptic semilinear system,  and  the theory of linear elliptic equations applies  if one has a good control of the right hand side.  The case $p \not= 2$ is more complicated, and one has to take care of the vanishing of the term $ \nabla u $. 

\medskip

 The regularity of harmonic maps has been a very attractive subject of research the last thirty years. The first result is due to S. Hildebrandt {\it et al} \cite{sH77} who proved that harmonic  maps ($p= 2$) whose images lie in a small geodesic ball of the target manifold are smooth. Later, W. J\"ager and H. Kaul \cite{wJ79} proved the uniqueness of solutions to the Dirichlet problem for such maps. As it can be seen by  counterexamples of sphere-valued  maps,  the Dirichlet problem for general harmonic maps may admit more than one solution.   Concerning  partial regularity for harmonic maps, many results were obtained for minimizing harmonic maps, and more generally for stationary harmonic maps.  Such maps are smooth  outside a closed singular set of Hausdorff dimension at most $m-2$, see  \cite{fB93} , \cite{fH91}, \cite{tR08}, \cite{rS82} and references therein.   Concerning $p$-harmonic maps,  the situation is more complicated since the system of equations (1.2) is quasilinear and degenerate. To our knowledge, the only known uniqueness result  for Dirichlet problems of $p$-harmonic maps is due to the first author \cite{aF05}, and it concerns maps with values in the Euclidean sphere $\mathbb{S}^n$.  Concerning the partial regularity, it was shown by  R. Hardt and F. Lin \cite{rH87}, and S. Luckhauss \cite{sL88}, that minimizing $p$-harmonic maps are \  $C^{1, \alpha}$, $ 0< \alpha < 1$,   outside a closed singular set of Hausdorff dimension at most $m - [p] -1 $. In  \cite{mF89},  M. Fuchs   generalised the result of S. Hildebrandt {\it et al }\cite{sH77} above to $p$-harmonic maps but with the additional assumption that the map is  stationary. (see also  \cite{aG05},  \cite{pS03} for related results  on removable singularities of $p$-harmonic maps). 
 
  \medskip
 
 In this paper we prove  the uniqueness of solutions to the Dirichlet problem for $p$-harmonic maps with values in small geodesic balls of the target manifold. As a consequence, we prove that such maps  are  \ $C^{1,  \alpha}$ for some $0 < \alpha < 1$ and are minimizing the $p$-energy. 
 
 \medskip
 
\newtheorem{theo}{Theorem}[section]
 \hypertarget{t2}{} \begin{theo} Let $M$ be a compact Riemannian manifold with smooth boundary $\partial M$, $N$ a compact  Riemannian manifold  and $p \ge 2$. There is a constant $\varepsilon_0 =  \varepsilon_0 (N , p) > 0$  
depending only on $N$  and $p$  such that if  $u , v  \in W^{1 , p}(M , N)$  are $p$-harmonic maps satisfying 
$$u(M) \subset  B(P_0 , \varepsilon_0) \    ,  \   v(M) \subset  B(P_0 , \varepsilon_0)     \   \hbox{and }   \   u = v  \ \hbox{on } \ \partial M,  $$
where \  $ B(P_0 , \varepsilon_0 )$ is a geodesic ball of $N$ of radius $\varepsilon_0$ centered at some point $P_0 \in N,$     then $ u= v$ on $M$. 
 \end{theo}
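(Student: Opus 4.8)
The plan is to adapt the maximum-principle argument of J\"ager and Kaul \cite{wJ79} for harmonic maps to the degenerate setting, replacing the Laplacian by the weighted degenerate operator produced by the equations \hyperlink{1.1}{(1.1)}. The organizing idea is that on a sufficiently small geodesic ball the squared distance function of $N$ is uniformly convex, so the scalar function measuring the distance between $u$ and $v$ ought to be a weak subsolution of a degenerate elliptic equation; since it vanishes on $\partial M$ and is nonnegative, a maximum principle will force it to vanish identically.

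First I would fix the radius. Let $\kappa>0$ be an upper bound for the sectional curvature of $N$ (if $\kappa\le 0$ the convexity below is automatic). I choose $\varepsilon_0$ so small that $B(P_0,\varepsilon_0)$ is a \emph{regular ball}: contained in a single normal coordinate chart, disjoint from the cut locus, strictly geodesically convex, and with $\varepsilon_0<\pi/(2\sqrt{\kappa})$. Under these conditions the function $e(P,Q)=\tfrac12\,d_N(P,Q)^2$ is smooth on $B(P_0,\varepsilon_0)\times B(P_0,\varepsilon_0)$ and, by the Hessian comparison theorem, its covariant Hessian on the product is positive semidefinite with an explicit lower bound $\lambda=\lambda(\kappa,\varepsilon_0)>0$ on the directions that occur below; this is the step that pins down the dependence $\varepsilon_0=\varepsilon_0(N,p)$. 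Because the common image lies in one chart, both $u$ and $v$ satisfy the intrinsic system \hyperlink{1.1}{(1.1)} weakly, by the equivalence of \hyperlink{1.1}{(1.1)} and \hyperlink{1.2}{(1.2)} noted above.

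Next I set $\Phi(x)=e(u(x),v(x))\ge 0$; then $\Phi\in W^{1,p}(M)\cap L^\infty(M)$ and $\Phi=0$ on $\partial M$ since $u=v$ there. The heart of the argument is to show that, for every nonnegative $\eta\in W^{1,p}_0(M)\cap L^\infty(M)$,
$$\int_M \big(\,|\nabla u|^{p-2}\,\partial_P e(u,v)\!\cdot\!\nabla u+|\nabla v|^{p-2}\,\partial_Q e(u,v)\!\cdot\!\nabla v\,\big)\cdot\nabla\eta\ dg\ \le\ 0 ,$$
where $\partial_P e,\partial_Q e$ denote the partial gradients of $e$ in its two slots. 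I obtain this by inserting the admissible tangential test fields $\varphi=\eta\,\partial_P e(u,v)$ into the $u$-equation \hyperlink{1.1}{(1.1)} and $\psi=\eta\,\partial_Q e(u,v)$ into the $v$-equation, and adding. When the derivatives fall on $\partial_P e$ and $\partial_Q e$ one produces precisely the covariant Hessian of $e$ contracted with $(\nabla u,\nabla v)$ (the Christoffel terms on the right of \hyperlink{1.1}{(1.1)} being exactly those needed to complete the intrinsic Hessian), which is $\ge 0$ by the convexity of $e$; when they fall on $\eta$ one gets the displayed weighted gradient term, so the inequality acquires the correct sign.

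The main obstacle is making this rigorous. The weight above is \emph{not} a single $p$-Laplacian: the $u$-block carries $|\nabla u|^{p-2}$ while the $v$-block carries $|\nabla v|^{p-2}$, so one cannot simply read off a subsolution of one degenerate equation. These two weights must be reconciled (for instance by comparing the displayed vector field with a common weight times $\nabla\Phi$) and controlled at points where $\nabla u$ or $\nabla v$ vanishes, and one must check that the weighted Hessian quadratic form stays nonnegative despite the mismatch; it is here that the smallness of $\varepsilon_0$ is used decisively, to make the diagonal Hessian terms dominate the cross and curvature terms. Together with establishing the convexity inequality for $e$ with an explicit constant and proving a weak maximum principle for the resulting degenerate operator acting on merely $W^{1,p}$ solutions (no a priori continuity of $u,v$ being available), this is where the real work lies. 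Once the subsolution inequality and the maximum principle are in hand, one gets $\Phi\le 0$ in $M$; combined with $\Phi\ge 0$ this yields $\Phi\equiv 0$, hence $u=v$, which is the desired uniqueness and is the natural degenerate analogue of \cite{sH77,wJ79}.
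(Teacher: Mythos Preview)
Your outline is the natural degenerate analogue of the J\"ager--Kaul argument, but the two places you flag as ``where the real work lies'' are not merely technicalities: they are genuine obstructions that your sketch does not overcome. When you test the $u$-equation with $\eta\,\partial_P e$ and the $v$-equation with $\eta\,\partial_Q e$, the term you throw away is
\[
|\nabla u|^{p-2}\bigl(\mathrm{Hess}_{PP}e[\nabla u,\nabla u]+\mathrm{Hess}_{PQ}e[\nabla u,\nabla v]\bigr)
+|\nabla v|^{p-2}\bigl(\mathrm{Hess}_{QP}e[\nabla v,\nabla u]+\mathrm{Hess}_{QQ}e[\nabla v,\nabla v]\bigr),
\]
which is \emph{not} the full Hessian of $e$ applied to a single tangent vector of $N\times N$; convexity of $e$ does not give its sign. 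You would have to prove separately that this asymmetrically weighted expression is nonnegative. And even granting the differential inequality, the vector field that remains, $|\nabla u|^{p-2}\partial_P e\cdot\nabla u+|\nabla v|^{p-2}\partial_Q e\cdot\nabla v$, is not a scalar weight times $\nabla\Phi$, so you are not in the framework of any off-the-shelf weak maximum principle for degenerate operators; choosing $\eta=\Phi$ does not produce a sign either. Smallness of $\varepsilon_0$ does not obviously cure this, because the weight mismatch $|\nabla u|^{p-2}$ versus $|\nabla v|^{p-2}$ persists however small the target ball is.

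The paper bypasses all of this by an extrinsic energy argument rather than a maximum principle. Working with the isometric embedding $N\hookrightarrow\mathbb{R}^k$ and the equation \hyperlink{1.2}{(1.2)}, it tests the difference of the two equations with $\varphi=u-v\in W^{1,p}_0\cap L^\infty$. The monotonicity inequality $(|X|^{p-2}X-|Y|^{p-2}Y)\cdot(X-Y)\ge\tfrac12(|X|^{p-2}+|Y|^{p-2})|X-Y|^2$ gives the coercive left-hand side $\int_M(|\nabla u|^{p-2}+|\nabla v|^{p-2})|\nabla(u-v)|^2$, while a Lipschitz estimate for the second fundamental form controls the right-hand side by the same quantity plus $\int_M(|\nabla u|^p+|\nabla v|^p)|u-v|^2$. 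The latter is absorbed using a stability (Caccioppoli-type) inequality, proved by testing \hyperlink{1.2}{(1.2)} with $|\varphi|^2(u-P_0)$: for images in a Euclidean ball of radius $r$ one has $\int|\nabla u|^p|\varphi|^2\le 16r^2\int|\nabla u|^{p-2}|\nabla\varphi|^2$. The upshot is an inequality of the form $I\le C_p\,r\,I$ with $I=\int(|\nabla u|^{p-2}+|\nabla v|^{p-2})|\nabla(u-v)|^2$, forcing $I=0$ for $r$ small. No Hessian comparison, no maximum principle, and the mismatched weights $|\nabla u|^{p-2}$, $|\nabla v|^{p-2}$ simply sit side by side rather than needing to be reconciled.
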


\bigskip

Theorem 1.1 allows us  to prove the following regularity result :

  \hypertarget{t1}{}\begin{theo} Let  $M$ and $N$ be compact Riemannian manifolds, and let $p \ge 2$. There is a constant $\varepsilon_1 =  \varepsilon_1 ( N , p) > 0$ \
depending only on $N$  and $p$  such that if $u \in W^{1 , p}(M , N)$ is a $p$-harmonic map satisfying 
$$u(\Omega) \subset B(P_0 , \varepsilon_1),$$ 
where \ $\Omega$ is an open set of $M$ and $ B(P_0 , \varepsilon_1 )$ is a geodesic ball of $N$ of radius $\varepsilon_1$ centered at some point $P_0 \in N$,  then 
$u \in C^{1 , \alpha}(\Omega, N)$ for some $0< \alpha < 1$.  Moreover, $u$ is minimizing the $p$-energy in $\Omega$  among maps agreeing with $u$ on $\partial \Omega$ and having their values in $B(P_0, \varepsilon_1)$.   \end{theo}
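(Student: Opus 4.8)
The strategy is to deduce the regularity of the arbitrary weak solution $u$ from the regularity of an energy minimizer having the same boundary values, and to identify the two by means of Theorem~\hyperlink{t2}{1.1}. This detour is necessary because a general weak solution of \hyperlink{1.2}{(1.2)} carries no regularity a priori; the uniqueness theorem is exactly what forces such a solution to coincide with the well-behaved minimizer, whose regularity for small-range images is already available in the literature.

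First I would fix the constant: take $\varepsilon_1 < \varepsilon_0$, where $\varepsilon_0 = \varepsilon_0(N,p)$ is the constant of Theorem~\hyperlink{t2}{1.1}, and small enough (depending only on $N$ and $p$) that $B(P_0,\varepsilon_1)$ is geodesically convex, that the nearest-point retraction $\Pi$ of $N$ onto $\overline{B(P_0,\varepsilon_1)}$ is distance non-increasing, and that the closed ball $\overline{B(P_0,\varepsilon_1)}$ is contained in $B(P_0,\varepsilon_0)$. Since regularity is a local statement, I would fix $x_0 \in \Omega$ and a small geodesic ball $B = B_M(x_0,r)$ with $\overline{B} \subset \Omega$, whose closure is a compact manifold with smooth boundary, and it suffices to prove $u \in C^{1,\alpha}(B,N)$.

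Next I would build a comparison minimizer on $B$. By the direct method --- lower semicontinuity of $E_p$ under weak $W^{1,p}$ convergence, together with the weak closedness of the constraint $w(B) \subset \overline{B(P_0,\varepsilon_1)}$ --- there is $v \in W^{1,p}(B,N)$ minimizing $E_p$ among all $w$ with $w = u$ on $\partial B$ and $w(B) \subset \overline{B(P_0,\varepsilon_1)}$. Because $\Pi$ does not increase energy and fixes the boundary data (already valued in the ball), $v$ in fact minimizes $E_p$ among all $N$-valued competitors with boundary trace $u|_{\partial B}$; in particular $v$ is a genuine minimizing, hence stationary, $p$-harmonic map. This is the step I expect to be the main obstacle: I would then show $v \in C^{1,\alpha}(B,N)$ by invoking the regularity theory for minimizing (stationary) $p$-harmonic maps of Hardt--Lin \cite{rH87}, Luckhaus \cite{sL88} and Fuchs \cite{mF89}, the crucial use of the smallness of the range being that it makes the scaled $p$-energy small at every point, so that the $\varepsilon$-regularity theorem applies everywhere and the singular set is empty. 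Being continuous with image in a single small chart, $v$ then solves \hyperlink{1.1}{(1.1)}, equivalently \hyperlink{1.2}{(1.2)}.

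Finally I would invoke uniqueness. The maps $u|_B$ and $v$ are both $p$-harmonic on $B$, agree on $\partial B$, and have images in $\overline{B(P_0,\varepsilon_1)} \subset B(P_0,\varepsilon_0)$; so Theorem~\hyperlink{t2}{1.1}, applied with $M$ replaced by $\overline{B}$, yields $u = v$ on $B$. Hence $u \in C^{1,\alpha}(B,N)$, and as $x_0 \in \Omega$ was arbitrary, $u \in C^{1,\alpha}(\Omega,N)$. For the minimizing property I would use the strict convexity of $E_p$ along geodesic homotopies inside the small ball that already drives the proof of Theorem~\hyperlink{t2}{1.1}: given any competitor $w$ with $w = u$ on $\partial\Omega$ and $w(\Omega) \subset B(P_0,\varepsilon_1)$, the geodesic interpolation between $u$ and $w$ stays in the convex ball, and the convexity estimate combined with the criticality of $u$ gives $E_p(u) \le E_p(w)$.
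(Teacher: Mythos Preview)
Your strategy coincides with the paper's: build a constrained minimizer $v$ on a smooth subdomain with the same boundary trace and small range, show $v$ is $p$-harmonic and $C^{1,\alpha}$, then invoke Theorem~\hyperlink{t2}{1.1} to get $u=v$. Two points of execution differ and one detail should be corrected.

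To show the constrained minimizer is genuinely $p$-harmonic, the paper proves a separate Proposition~\hyperlink{p2}{3.1}: working in normal coordinates about $P_0$, it shows the minimizer's image actually lies in the \emph{open} ball $B(P_0,r)$ by testing with $\eta=\max(|v|^2-r^2,0)$ and using the Hildebrandt inequality $|\nabla v|^2-\Gamma^j_{kl}(v)v^j\nabla v^k\nabla v^l\ge 0$ (valid for $r<\min(i_N,\pi/(2\kappa))$); the constraint is then inactive and the Euler--Lagrange system \hyperlink{1.1}{(1.1)} follows. Your alternative via a distance-nonincreasing nearest-point retraction onto $\overline{B(P_0,\varepsilon_1)}$ is legitimate and shorter, but you should state explicitly that such a retraction exists and is $1$-Lipschitz precisely because the ball is geodesically convex when $\varepsilon_1<\min(i_N,\pi/(2\kappa))$.

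Your justification ``the smallness of the range makes the scaled $p$-energy small at every point'' is not the correct mechanism: small range does not by itself bound $r^{p-m}\int_{B_r}|\nabla v|^p$. The paper simply cites Fuchs's theorem for stationary $p$-harmonic maps with image in a geodesic ball of radius $<\min(i_N,\pi/(4\kappa))$, whose proof rests on convexity of the squared distance function rather than an $\varepsilon$-regularity smallness condition; you should impose $\varepsilon_1<\pi/(4\kappa)$ and quote Fuchs directly.

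Finally, the minimizing property of $u$ needs no separate argument: once $u=v$ on $D$ and $v$ is by construction the minimizer among small-range competitors with boundary trace $u$, $u$ inherits that property at once. Your proposed convexity-along-geodesic-homotopies argument is unnecessary, and incidentally Theorem~\hyperlink{t2}{1.1} is not proved that way in the paper---it uses a stability inequality (Proposition~\hyperlink{p1}{2.1}) together with direct PDE estimates, not second-variation convexity.
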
 

\bigskip

Theorem 1.1  and Theorem 1.2 can be seen respectively as  extensions of the results  of   W. J\"ager and H. Kaul \cite{wJ79} and S. Hildebrandt {\it et al} \cite{sH77} above  to $p$-harmonic maps.   Due to the lack of ellipticity, the regularity $ C^{1, \alpha}$  in  Theorem 1.2  is the best one could expect in general for $p$-harmonic maps. 

\medskip

 It would be interesting to compute the optimal value of the constant $\varepsilon_0$  in Theorem \hyperlink{t1}{1.1},  and $\varepsilon_1$ in Theorem \hyperlink{t2}{1.2}. When $p=2$, S. Hildebrandt {\it  et al} \cite{sH77}  and W. J\"ager and H. Kaul \cite{wJ79} computed respectively   upper bounds of  the constant  $\varepsilon_0$ and $\varepsilon_1$,  they found 
$$\varepsilon _0 = \varepsilon_1< \inf(i_N,  {\pi \over 2 \kappa}) \ , $$ where $\kappa > 0$  is an upper bound of the sectional curvature of $N$, and $i_N$ is the injectivity radius of $N$.    These bounds are optimal as it can be seen by considering sphere-valued harmonic maps (see \cite{sH84}). 
For stationary $p$-harmonic maps,  M.Fuchs\cite{mF89}  obtained $\varepsilon_1< \inf(i_N,  {\pi \over 4 \kappa})$.  In the case that  $N = \mathbb{S}^n$, the first author found  in \cite{aF05}   the optimal bounds  \ $\varepsilon _0= \varepsilon_1 <  {\pi \over 2}$ \ . 

\medskip

The paper is organized as follows. In section \hyperlink{s2}{2} we  prove Theorem \hyperlink{t2}{1.1}. In section \hyperlink{s3}{3} we show the  existence of  a minimizing  $p$-harmonic map with small range, and then  we  combine this result with Theorem \hyperlink{t2}{1.1}  to prove Theorem \hyperlink{t1}{1.2}.

\bigskip

\hypertarget{s2}{}\section{Uniqueness of $p$-harmonic maps with small range }

   \bigskip

This section is devoted to the proof of Theorem \hyperlink{t2}{1.1}, which  needs  some preliminary results. In what follows,  $M$ is a compact  Riemannian manifold eventually with boundary, and $N$ is a compact Riemannian manifold without boundary, which is isometrically embedded in some Euclidean space $\mathbb{R}^k$. For $P_0 \in N$ and $r >0$, we denote by $B(P_0, r) $ the Euclidean  open ball in $\mathbb{R}^k$ of radius $r$,  centered at $P_0$. One of the  principal ingredients in the proof of Theorem \hyperlink{t2}{1.1} is the following stability inequality. 

\bigskip

\hypertarget{p1}{}\newtheorem{prop}{Proposition}[section]  \begin{prop} There exists a constant $C_N $ depending only on $N$ such that  if   $u \in W^{1, p}(M, N)$  is a  $p$-harmonic map satisfying 
$u(M) \subset B(P_0 , r )$, for some $P_0 \in N$ and  $0 < r < C_N$, then  we have, for any \ $\varphi \in W^{1, p}_0(M, \mathbb{R}^k)$,  
$$\int_M |\nabla u|^p |\varphi|^2 dg \le  16 r^2   \int_M |\nabla u |^{p-2}|\nabla \varphi |^2 dg  \ .$$ \end{prop}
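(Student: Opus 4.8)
The plan is to test the weak form of \hyperlink{1.2}{(1.2)} against the ``radial'' vector field $\psi = |\varphi|^2 (u - P_0)$, whose whole point is to exploit that the nonlinearity $A(u)(\nabla u,\nabla u)$ is normal to $N$ while $u - P_0$ is small. First I would check that, for $\varphi \in W^{1,p}_0(M,\mathbb{R}^k)\cap L^\infty(M,\mathbb{R}^k)$, the map $\psi$ with components $\psi^j = |\varphi|^2(u^j - P_0^j)$ is an admissible test function: since $|u-P_0|<r$ it lies in $L^\infty$, and $\nabla\psi^j = |\varphi|^2\nabla u^j + (u^j - P_0^j)\nabla|\varphi|^2$ lies in $L^p$ because $|\varphi|^2 \in L^\infty$, $\nabla u \in L^p$, and $\varphi\,\nabla\varphi\in L^p$. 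Substituting $\psi$ into the weak formulation and using the elementary identity $\sum_j (u^j - P_0^j)\nabla u^j = \tfrac12\nabla|u - P_0|^2$, a direct computation gives
$$\int_M |\nabla u|^p|\varphi|^2\,dg = \int_M |\nabla u|^{p-2}|\varphi|^2\,A(u)(\nabla u,\nabla u)\cdot(u - P_0)\,dg - \frac12\int_M |\nabla u|^{p-2}\,\nabla|u-P_0|^2\cdot\nabla|\varphi|^2\,dg .$$

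It then remains to control the two terms on the right. For the first, since $N$ is compact its second fundamental form is uniformly bounded, say $|A(q)|\le \Lambda_N$ for all $q\in N$, whence $|A(u)(\nabla u,\nabla u)|\le \Lambda_N|\nabla u|^2$ pointwise; together with $|u - P_0|<r$ this bounds the first term by $\Lambda_N\, r\int_M |\nabla u|^p|\varphi|^2$. For the second term I would use the pointwise estimates $|\nabla|u - P_0|^2|\le 2r|\nabla u|$ and $|\nabla|\varphi|^2|\le 2|\varphi||\nabla\varphi|$ combined with Young's inequality $2ab\le \epsilon a^2 + \epsilon^{-1}b^2$, placing the factor $r$ into the second slot, to obtain for every $\epsilon>0$ a bound by $\epsilon\int_M|\nabla u|^p|\varphi|^2 + \epsilon^{-1}r^2\int_M|\nabla u|^{p-2}|\nabla\varphi|^2$.

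Putting these together yields $(1 - \Lambda_N r - \epsilon)\int_M|\nabla u|^p|\varphi|^2 \le \epsilon^{-1}r^2\int_M|\nabla u|^{p-2}|\nabla\varphi|^2$. Choosing the threshold $C_N := 1/(2\Lambda_N)$, so that $\Lambda_N r < 1/2$ under the hypothesis $0<r<C_N$, and taking $\epsilon = 1/4$, the coefficient on the left is at least $1/4$, and one reads off precisely the claimed inequality with constant $16\,r^2$. Finally, I would extend the inequality from $\varphi\in W^{1,p}_0\cap L^\infty$ to an arbitrary $\varphi\in W^{1,p}_0(M,\mathbb{R}^k)$ by a standard truncation and density argument, using H\"older on the right-hand side (where $|\nabla u|^{p-2}\in L^{p/(p-2)}$) and Fatou's lemma on the left along an a.e.\ convergent subsequence.

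I expect the genuine subtleties to be the justification of $\psi$ as a legitimate test function and the passage to general $\varphi\in W^{1,p}_0$; by contrast, the geometric bound on $A$ and the Young step are routine. It is worth emphasizing that the smallness of $r$ is used in exactly one essential place, namely to absorb the curvature term $\Lambda_N r\int_M|\nabla u|^p|\varphi|^2$ back into the left-hand side, which is what forces the restriction $r<C_N$.
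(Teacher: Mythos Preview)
Your argument is correct and follows essentially the same route as the paper: both test \hyperlink{1.2}{(1.2)} with $|\varphi|^2(u-P_0)$, bound the curvature term by $C_0 r\int|\nabla u|^p|\varphi|^2$ using compactness of $N$, and absorb it under the condition $C_0 r<\tfrac12$. The only cosmetic differences are that the paper reduces to $\varphi\in C_0^\infty$ at the outset (rather than extending from $W^{1,p}_0\cap L^\infty$ at the end) and handles the cross term via Cauchy--Schwarz followed by division instead of Young's inequality with parameter $\epsilon=\tfrac14$; both yield the same constant $16r^2$.
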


\bigskip

\begin{proof}  By a limiting argument, it suffices to prove the proposition for $\varphi \in C_0^{\infty}(M, \mathbb{R}^k)$ since this space is dense in $W^{1, p}_0(M, \mathbb{R}^k)$.  
Let then $\varphi  \in C_0^{\infty}(M, \mathbb{R}^k)$ and take  $|\varphi|^2(u-P_0)$ as testing function in   the system \hyperlink{1.2}{(1.2)}, we get 
\hypertarget{2.1}{}$$\int_M |\nabla u|^p |\varphi|^2 dg   \  \le  \   2  \int_M |\nabla u|^{p-1}  |u-P_0| |\nabla \varphi| |\varphi| \ dg     $$
$$ + \  \int_M |\nabla u|^{p-2} \left|A(u)(\nabla u , \nabla u )\right| |u-P_0| |\varphi|^2  \ dg . \eqno (2.1) $$

\medskip

 Since $N$ is compact and $|u-P_0| < r$, we have by the bilinearity of the second fundamental form,    $\left|A(u)(\nabla u , \nabla u)\right| |u-P_0 | \le C_0 r |\nabla u|^2$, where $C_0$ is a constant depending on $N$. Then the last term in the right hand side of \hyperlink{2.1}{(2.1)}  satisfies
\hypertarget{2.2}{} $$ \int_M |\nabla u|^{p-2} \left|A(u)(\nabla u , \nabla u )\right| |u-P_0| |\varphi|^2  dg   \le C_0 r \int_M |\nabla u|^p |\varphi|^2 dg \ .   \eqno (2.2)$$
 
 \noindent On the other hand, by the Cauchy-Schwarz inequality we have 
\hypertarget{2.3}{}$$  2  \int_M |\nabla u|^{p-1}|u-P_0| |\nabla \varphi| |\varphi| \ dg \le $$
$$2r \left(\int_M |\nabla u|^p |\varphi|^2 dg \right)^{1\over 2} \left(
 \int_M |\nabla u |^{p-2}|\nabla \varphi |^2 dg\right)^{1\over 2} . \eqno (2.3)$$
 
 \medskip
 
 It follows from \hyperlink{2.1}{(2.1)}, \hyperlink{2.2}{(2.2)}  and \hyperlink{2.3}{(2.3)}  that 
 
$$ (1-C_0 r) \int_M |\nabla u|^p |\varphi|^2 dg \le 2r \left(\int_M |\nabla u|^p |\varphi|^2 dg \right)^{1\over 2} \left(
 \int_M |\nabla u |^{p-2}|\nabla \varphi |^2 dg\right)^{1\over 2} ,$$
 which gives, if we suppose $C_0 r  < {1 \over 2} $, 
 $$\left( \int_M |\nabla u|^p |\varphi|^2 dg\right)^{1\over 2}  \le  4 r \left( \int_M |\nabla u |^{p-2}|\nabla \varphi |^2 dg\right)^{1\over 2}. $$
 The proposition is then proved.
 \end{proof}
 
 \bigskip
 
 For the proof  Theorem \hyperlink{t2}{1.1} we need also two lemmas.  The first one   concerns some  inequalities on Euclidean spaces  that  we will prove for the convenience of the reader. 
 
 \bigskip
 
 \hypertarget{l1}{}\newtheorem{lem}{Lemma}[section]  \begin{lem} Let  \ $V$ be a real vector space endowed with an inner product. For $X , Y  \in V,$  we denote by  \ $X\cdot Y$  \ the inner product of $X$ and $Y$, and  by  $|X| = \sqrt{X\cdot X}$ the associated norm of $X$. Then for any $q \ge 0$,  and for any $ X , Y \in V$, we have 
 \hypertarget{2.4}{}$$\left( |X|^qX - |Y|^qY\right)\cdot(X - Y) \ge {1\over 2}\left(|X|^q + |Y|^q \right)|X-Y|^2 \   \eqno (2.4) $$
 and 
 \hypertarget{2.5}{} $$  \left||X|^qX - |Y|^qY \right| \le (q+1)\left( |X|^q + |Y|^q \right)  |X - Y| \ .\eqno (2.5)$$
 
  \end{lem}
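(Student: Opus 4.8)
The plan is to treat the two inequalities by entirely elementary means, reducing each to a monotonicity statement for the scalar function $t \mapsto t^q$ on $[0,\infty)$. Throughout I would write $a = |X|$ and $b = |Y|$.

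For \hyperlink{2.4}{(2.4)} I would first expand the inner product on the left directly, obtaining $a^{q+2} + b^{q+2} - (a^q + b^q)(X\cdot Y)$; expanding $|X-Y|^2 = a^2 - 2\,X\cdot Y + b^2$ shows that the right-hand side contains the identical cross term $(a^q + b^q)(X\cdot Y)$. The crucial observation is that these cross terms cancel exactly, so that the difference of the two sides is the $X\cdot Y$-free quantity $\frac{1}{2}(a^q - b^q)(a^2 - b^2)$. Since both $t\mapsto t^q$ (as $q \ge 0$) and $t \mapsto t^2$ are nondecreasing on $[0,\infty)$, the factors $a^q - b^q$ and $a^2 - b^2$ share the same sign, so their product is nonnegative and \hyperlink{2.4}{(2.4)} follows. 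This argument needs no smallness or nondegeneracy assumption and is valid for all $q \ge 0$.

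For \hyperlink{2.5}{(2.5)} I would introduce the map $F : V \to V$, $F(Z) = |Z|^q Z$, and estimate $F(X) - F(Y)$ by integrating its differential along the segment $Z(t) = (1-t)Y + tX$, $t \in [0,1]$. Away from the origin $F$ is smooth, with $DF(Z)W = |Z|^q W + q|Z|^{q-2}(Z\cdot W)Z$; by Cauchy--Schwarz the operator norm satisfies $\|DF(Z)\| \le (q+1)|Z|^q$. Hence $g(t) := F(Z(t))$ obeys $|g'(t)| \le (q+1)|Z(t)|^q\, |X - Y|$, and integrating from $0$ to $1$ gives $|F(X) - F(Y)| \le (q+1)|X-Y|\int_0^1 |Z(t)|^q\,dt$. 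Finally the bound $|Z(t)| \le (1-t)b + ta \le \max(a,b)$, together with $\max(a,b)^q \le a^q + b^q$, yields \hyperlink{2.5}{(2.5)}.

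The one point requiring care is the behaviour of $F$ at the origin when $0 \le q < 1$, namely whether the fundamental theorem of calculus may be applied along a segment that passes through $0$. I would resolve this by checking that $F$ is in fact $C^1$ on all of $V$ for every $q \ge 0$: the estimate $\|DF(Z)\| \le (q+1)|Z|^q \to 0$ as $Z \to 0$ shows that $DF$ extends continuously by setting $DF(0) = 0$, so $F$ is continuously differentiable everywhere and the integral representation of $g(1) - g(0)$ holds unconditionally. (Alternatively one could split the segment at the crossing point and pass to a limit, but the $C^1$ verification is cleaner and uniform in $q$.) This removes the only genuine obstacle, and both estimates are then established.
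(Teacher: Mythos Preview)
Your proof is correct and close in spirit to the paper's, but with some differences worth noting. For (2.4) the paper first normalises $|X|=1$, reduces to the two-dimensional span of $X,Y$, writes $Y=(r\cos\theta,r\sin\theta)$, and after cancellation arrives at $1+r^{q+2}-r^q-r^2=(1-r^2)(1-r^q)\ge 0$; your direct expansion showing that the $X\cdot Y$ terms cancel and leaving $\tfrac12(a^q-b^q)(a^2-b^2)$ is the same identity without the detour through polar coordinates, which is a cleaner route. For (2.5) the paper uses exactly your map $F(Z)=|Z|^qZ$ together with the bound $\|dF(Z)\|\le(q+1)|Z|^q$, then invokes the mean value inequality and $\max(|X|,|Y|)^q\le|X|^q+|Y|^q$; your integral version is equivalent, and you add a point the paper glosses over, namely the $C^1$ regularity of $F$ at the origin for $0<q<1$. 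One small quibble: your sentence ``$\|DF(Z)\|\le(q+1)|Z|^q\to 0$'' does not apply when $q=0$, but in that case $F$ is the identity and no issue arises.
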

 
 \bigskip
 
\begin{proof}   Let us first prove \hyperlink{2.4}{(2.4)}. We may suppose, without loss of generality,   that $|X| = 1$.  Let $V_{X, Y}$ be  a $2$-dimensional vector subspace of $V$ containing the  vectors $X$ and $Y$.  Set $e_1 = X$ and choose a vector $e_2 \in  V_{X, Y}$ such that \ $\{ \  e_1 , e_2 \  \}$  is an orthonormal basis of $V_{X, Y}$. Taking coordinates with respect to this basis,  we have  
$$X = (1 , 0)   \   \    \hbox{and}   \    \     Y = \left( r\cos\theta   ,   r \sin\theta \right)  \   , \   \hbox{ with} \   r = |Y|  , \   \theta \in [0 , 2 \pi ] . $$
Then inequality  \hyperlink{2.4}{(2.4)} can be written 
$$1 + r^{q+2} - r(1+r^q)\cos\theta \ge {1\over 2} (1+r^q)(1+r^2 - 2r\cos\theta ) \ ,  $$

\noindent which is equivalent to 
$$1 + r^{q+2} - r^{q} -  r^{2}  \ge 0 . $$
But the last inequality is always true since \  $1 + r^{q+2} - r^{q} -  r^{2}   = (1-r^2)(1-r^q)$ \ and $q \ge 0$.  This proves \hyperlink{2.4}{(2.4)}.

\medskip

\noindent Now,  to  prove \hyperlink{2.5}{(2.5)} we  set  $F(Z) = |Z|^qZ$. Then by the mean value theorem, we have 
$$ \left||X|^qX - |Y|^qY\right|  = \left|F(X) - F(Y)\right|  \le   \sup_{0 \le t \le 1}\|dF( tX + (1-t)Y )\|  |X-Y| , $$
where $dF(Z) : V \to V$ is the differential of $F$ at $Z $, and where we denote by $\| L\| $ the norm of any bounded linear map  $L : V \to V$.   A straightforward  computation gives,  for any $Z \in V$, 
$$ \|dF(Z)\| \le (q + 1)|Z|^q  ,$$
which implies 
$$ \left| |X|^qX - |Y|^qY\right|  \le (q+1)  \sup_{0 \le t \le 1}\left| tX + (1-t)Y\right|^q |X-Y| $$
$$ \le (q+1) \sup_{0 \le t \le 1}\left( t |X| + (1-t)|Y| \right)^q|X-Y| $$
$$ \le (q+1) \max\left(|X|^q , |Y|^q \right)|X-Y| $$
$$\le  (q+1) \left(|X|^q + |Y|^q \right)|X-Y|  .$$
This proves (2.5). 
\end{proof}

 \bigskip
 
 In the following lemma, we prove an inequality satisfied by the second fundamental form of $N$. For any $y \in N$, we denote by $T_yN \subset \mathbb{R}^k$ the tangent space of $N$ at $y$, and $A(y) : T_yN \times T_yN \to T_yN^{\perp}$ the second fondamental form of $N$ at $y$. 
 
 \begin{lem} There is a constant $C$ depending only on $N$ such that, for any $y, z \in N$, and for any $Y \in T_yN,  \ Z \in T_zN$,  we have 
 \hypertarget{2.6}{}$$\left|A(y)(Y , Y) - A(z)(Z , Z) \right| \le $$
 $$C\left( |Y|^2 + |Z|^2 \right)|y-z| + C \left( |Y| + |Z| \right) |Y - Z| , \eqno (2.6)$$
 where $| \ .\ | $ denotes the Euclidean norm in $\mathbb{R}^k$. \end{lem}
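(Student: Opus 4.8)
The plan is to realise the second fundamental form as the first derivative of the tangential projection, which makes its dependence on the base point manifestly smooth, and then to compare $A(y)(Y,Y)$ with $A(z)(Z,Z)$ by a telescoping argument. For $y \in N$, let $P(y) : \mathbb{R}^k \to T_yN$ be the orthogonal projection onto the tangent space; since $N$ is a smooth compact submanifold of $\mathbb{R}^k$, the matrix-valued map $y \mapsto P(y)$ is smooth on $N$. Differentiating the identity $P(\gamma(t))\dot\gamma(t) = \dot\gamma(t)$ along a curve $\gamma$ in $N$ with $\gamma(0) = y$ and $\dot\gamma(0) = Y$, and using the Gauss formula (the normal part of $\ddot\gamma(0)$ equals $A(y)(Y,Y)$), I would first establish the identity
$$A(y)(Y,Y) = \bigl(dP(y)[Y]\bigr)\,Y \qquad \text{for all } Y \in T_yN.$$
This reduces the geometry to the smoothness of the single map $P$.

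Next I would extend $A$ to a function defined for arbitrary ambient vectors by setting
$$\tilde A(y,V) := \bigl(dP(y)[P(y)V]\bigr)\,P(y)V, \qquad y \in N,\ V \in \mathbb{R}^k.$$
By the identity above, $\tilde A(y,Y) = A(y)(Y,Y)$ whenever $Y \in T_yN$; moreover $\tilde A$ is smooth in $y$ (as $P$ and $dP$ are smooth) and is a quadratic form in $V$ with coefficients bounded uniformly over the compact manifold $N$. Writing $A(y)(Y,Y) - A(z)(Z,Z) = \tilde A(y,Y) - \tilde A(z,Z)$ and telescoping,
$$\tilde A(y,Y) - \tilde A(z,Z) = \bigl[\tilde A(y,Y) - \tilde A(z,Y)\bigr] + \bigl[\tilde A(z,Y) - \tilde A(z,Z)\bigr],$$
splits the estimate into a base-point variation and a vector variation.

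For the first bracket only the base point changes while the ambient vector $Y$ is fixed; since $y \mapsto \tilde A(y,Y)$ is smooth on the compact set $N$ and homogeneous of degree two in $Y$, it is Lipschitz with respect to the Euclidean distance (smooth functions on a compact embedded submanifold are Euclidean-Lipschitz), giving $|\tilde A(y,Y) - \tilde A(z,Y)| \le C|y-z|\,|Y|^2$. For the second bracket the base point $z$ is fixed and $V \mapsto \tilde A(z,V)$ is a quadratic form whose polarization $Q_z$ has norm bounded uniformly in $z \in N$; the elementary identity $Q_z(Y,Y) - Q_z(Z,Z) = Q_z(Y-Z,Y) + Q_z(Z,Y-Z)$ then gives $|\tilde A(z,Y) - \tilde A(z,Z)| \le C(|Y|+|Z|)\,|Y-Z|$. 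Adding the two and using $|Y|^2 \le |Y|^2 + |Z|^2$ yields exactly \hyperlink{2.6}{(2.6)}. The only real obstacle here is conceptual rather than computational: because $Y$ and $Z$ lie in the different tangent spaces $T_yN$ and $T_zN$, one cannot difference $A(y)$ and $A(z)$ directly, and the purpose of the extension $\tilde A$ (in particular of the projections $P(y)V$) is precisely to bring both terms into a common framework defined on $N \times \mathbb{R}^k$; the identity $A(y)(Y,Y) = (dP(y)[Y])Y$ is what guarantees that this extension depends smoothly on the base point.
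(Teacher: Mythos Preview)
Your argument is correct, and it takes a genuinely different route from the paper's. The paper works locally: it covers $N$ by finitely many balls carrying smooth orthonormal frames $\{e_i\}$, distinguishes the cases $|y-z|\ge\delta/2$ (where the bound is trivial) and $|y-z|<\delta/2$ (where $y,z$ lie in a common frame chart), writes $Y=\sum\alpha_i e_i(y)$ and $Z=\sum\beta_i e_i(z)$, and then estimates the Lipschitz variation of $F_{ij}(x)=A(x)(e_i(x),e_j(x))$ together with the coefficient differences $\alpha_i\alpha_j-\beta_i\beta_j$; a separate step converts $|\alpha-\beta|$ back into $|Y-Z|$ via the Lipschitz bound on the frames. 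Your approach bypasses the frames entirely by realising $A(y)(Y,Y)=(dP(y)[Y])Y$ and extending to the ambient quadratic map $\tilde A(y,V)=(dP(y)[P(y)V])P(y)V$ on $N\times\mathbb{R}^k$. This is more conceptual and shorter: the telescoping $[\tilde A(y,Y)-\tilde A(z,Y)]+[\tilde A(z,Y)-\tilde A(z,Z)]$ isolates the base-point and vector variations in a single stroke, with no case distinction and no coordinate bookkeeping. The one point you might make slightly more explicit is why a smooth map on the compact embedded $N$ is Lipschitz for the \emph{Euclidean} distance (e.g.\ by extending $P$ smoothly to a tubular neighbourhood via the nearest-point projection, or by the standard comparability of intrinsic and extrinsic distances on a compact submanifold); you allude to this, and it is indeed routine. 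What the paper's approach buys is that it uses only the bilinearity and smoothness of $A$ as given, without invoking the Weingarten-type identity for $dP$; what yours buys is a cleaner, coordinate-free argument that makes the structure of the estimate transparent.
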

 
 \begin{proof} Since $N$ is smooth and compact, there exist $\delta> 0$, and a finite number of points $y_1 , \cdots , y_{K} \in N$ such that 
 $$ N \subset \bigcup_{\nu=1}^{K}B(y_\nu , {\delta \over 2} ) , $$
 and  for each $\nu= 1, \cdots , K$,   a smooth  orthonormal frame  $\{ e_i(y) \}_{1 \le i\le n}$  on  $N \cap B(y_\nu, \delta)$   of the tangent space of  $N$   at   $y \in  N\cap B(y_\nu , \delta )$.   Let  $y, z  \in  N$ and  $Y \in T_yN,  \  Z \in T_zN$. If $|y-z| \ge {\delta\over 2}$, then we have  by the bilinearity of the second fondamental form :
 $$ \left|A(y)(Y , Y) - A(z)(Z , Z) \right| \le C\left( |Y|^2 + |Z|^2 \right) $$
 $$ \hspace{1cm} \le 2C\delta^{-1}\left( |Y|^2 + |Z|^2 \right)|y-z| , $$
 where $C$ is a positive constant depending only on $N$. This  proves \hyperlink{2.6}{(2.6)} in this case. Now suppose  that $|y-z| < {\delta\over 2}$. Then $y, z \in B(y_{\nu_0} , \delta )$ for some $1\le \nu_0 \le K$, and consider a smooth orthonormal frame    $\displaystyle \{ e_i \}_{1 \le i\le n}$  on $N \cap B(y_{\nu_0}, \delta)$. For each \ $i, j = 1, \cdots, n$,  consider the map  $F_{i j} : B(y_{\nu_0} , \delta )\cap N \to \mathbb{R}^k$ defined by 
 $$F_{ij}(x) = A(x)\bigl(e_i(x), e_j(x)\bigr) \ , \    x\in  B(y_{\nu_0} , \delta )\cap N . $$
 Since $F_{ij}$ is a smooth map, then we have by the mean value theorem 
 $$|F_{ij}(y) -F_{ij}(z) | \le C|y-z| $$
   for a constant   $C$ depending only on $N$, that's 
 \hypertarget{2.7}{}$$\left|A(y)(e_i(y) , e_j(y)) - A(z)(e_i(z) , e_j(z)) \right| \le C |y-z| . \eqno (2.7)$$
 We have also by smoothness of the map $F_{ij}$, \  $|F_{ij}(y)| \le  C$ \ , \  $|F_{ij}(z)|\le C$, that's 
 
 \hypertarget{2.8}{}$$ \left| A(y)\bigl(e_i(y), e_j(y)\bigr)\right| \le C \     \text{and} \    \left| A(z)\bigl(e_i(z), e_j(z)\bigr)\right| \le C .  \eqno (2. 8 ) $$
 
 \noindent If we take two tangent vectors $Y \in T_yN$ and $Z \in T_zN$, then $ \displaystyle Y = \sum_{i= 1}^n \alpha_i e_i(y) $ \ and \  $\displaystyle Z = \sum_{i= 1}^n \beta_i e_i(z) $ for some $(\alpha_1, \cdots , \alpha_n) \in \mathbb{R}^n$ and \  $(\beta_1, \cdots , \beta_n) \in \mathbb{R}^n$. Set $\alpha =  (\alpha_1, \cdots , \alpha_n) $ and $\beta = (\beta_1, \cdots , \beta_n)$, then 
 \hypertarget{2.9}{}$$ |Y| =|\alpha| \  , \  |Z| = |\beta| . \eqno (2.9)$$
 
 \medskip
 
\noindent  Now, we have 

   $$ A(y)(Y , Y) - A(z)(Z , Z)  = \sum_{i , j =1}^{n}\Bigl( \alpha_i\alpha_j A(y)(e_i(y) , e_j(y)) -  \beta_i\beta_j A(z)(e_i(z) , e_j(z)) \Bigr). $$
  Then 
 $$ | A(y)(Y , Y) - A(z)(Z , Z)| \le  \sum_{i , j =1}^{n}|\alpha_i\alpha_j|\left|A(y)(e_i(y) , e_j(y)) - A(z)(e_i(z) , e_j(z)) \right| $$
 $$ + \sum_{i , j =1}^{n}|\alpha_i\alpha_j - \beta_i\beta_j |\left|A(z)(e_i(z) , e_j(z)) \right| $$
 which gives by using \hyperlink{2.7}{(2.7)} and \hyperlink{2.8}{(2.8)} 
 \hypertarget{2.10}{} $$  | A(y)(Y , Y) - A(z)(Z , Z)| \le  C \sum_{i , j =1}^{n}|\alpha_i\alpha_j| |y-z|  + C   \sum_{i , j =1}^{n}|\alpha_i\alpha_j - \beta_i\beta_j | $$
 $$ \le  C \sum_{i , j =1}^{n}|\alpha_i\alpha_j| |y-z|  + C  \sum_{i , j =1}^{n}|\alpha_i| |\alpha_j - \beta_j | + C \sum_{i, j = 1}^n |\beta_j| |\alpha_i - \beta_i| $$
 $$\le C |\alpha|^2|y-z| + C (|\alpha | + |\beta| ) |\alpha - \beta |  .  \eqno (2.10)$$
 
 \medskip
 
 \noindent On the other hand, we have 
 \hypertarget{2.11}{} $$|Y-Z| = \left| \sum_{i=1 }^{n} \alpha_ie_i(y) - \beta_ie_i(z) \right|  \ge \left| \sum_{i=1}^{n} (\alpha_i - \beta_i)e_i(y) \right| - \left| \sum_{i=1 }^{n} \beta_i (e_i(y) - e_i(z) ) \right|$$
 $$ \ge |\alpha - \beta | - \sum_{i=1 }^{n} |\beta_i | | e_i(y) - e_i(z)| .    \eqno (2.11) $$
  But by using again  the mean value theorem, we have, for any $i = 1, \cdots , n$, 
  $$|e_i(y) - e_i(z) | \le C |y-z| $$
  where $C$ depends only on $N$.  Hence it follows from \hyperlink{2.11}{(2.11)}  that 
  $$|Y-Z|   \ge |\alpha - \beta| - C |\beta| |y-z| ,$$ 
  that's 
  $$|\alpha - \beta | \le |Y-Z| + C|\beta| |y-z| .$$
  
  \medskip
  
  \noindent Combining this inequality with \hyperlink{2.10}{(2.10)} gives 
  
  \medskip
  
  $$  | A(y)(Y , Y) - A(z)(Z , Z)| \le  C(|\alpha |^2 + |\beta|^2 ) |y-z| + C (|\alpha| + |\beta| )|Y-Z|, $$
  
  \medskip
  
 \noindent  and since by \hyperlink{2.9}{(2.9)} we have $|\alpha | = |Y|$, $|\beta| = |Z|$, we finally  obtain 
 \medskip
  $$ | A(y)(Y , Y) - A(z)(Z , Z)| \le  C(|Y |^2 + |Z|^2 ) |y-z| + C (|Y| + |Z| )|Y-Z| .$$
  This proves the lemma. 
  
  \bigskip
  
  We are now in position to prove Theorem \hyperlink{t2}{1.1} . 
   
 \end{proof}

 \begin{proof}[Proof of Theorem \hyperlink{t1}{1.1}] Let $u, v \in W^{1, p}(M, N)$ \  be $p$-harmonic maps such that $u= v$ on $\partial M$. Since $u$ and $v$ are solutions of  system \hyperlink{1.2}{(1.2)}, then  we have for any   $\varphi \in W^{1, p}_0(M, \mathbb{R}^k)\cap L^{\infty}(M, \mathbb{R}^k)$, 
 
 $$ \int_M  |\nabla u|^{p-2}\nabla u \cdot \nabla\varphi  \  dg  =   \int_M |\nabla u|^{p-2} A(u)(\nabla u, \nabla u) )\cdot \varphi \ dg $$
 and
 $$ \int_M  |\nabla v|^{p-2}\nabla v \cdot \nabla\varphi  \  dg  =   \int_M |\nabla v|^{p-2} A(v)(\nabla v, \nabla v) )\cdot \varphi \ dg\ .$$
 
 \medskip
 
 \noindent Taking the difference of the last two equations,  and choosing  $\varphi = u - v  \in W^{1, p}_0(M, \mathbb{R}^k)\cap L^{\infty}(M, \mathbb{R}^k)$ (since $u-v = 0$ on $\partial M$), we get 
 
 \hypertarget{2.12}{}$$\int_M  \left(|\nabla u|^{p-2}\nabla u  - |\nabla v|^{p-2}\nabla v \right)\cdot \nabla (u-v)  \  dg  = $$
$$ \int_M \left(|\nabla u|^{p-2} A(u)(\nabla u, \nabla u) - |\nabla v|^{p-2}A(v)(\nabla v , \nabla v)\right)\cdot (u-v)  \  dg \  . \eqno (2.12) $$

\medskip 

 By choosing a local orthonormal frame on $M$,  we can identify \ $\nabla u$ and $\nabla v$ with vectors in $\mathbb{R}^{\nu }$ (endowed with its  usual inner product), where $\nu = mn$. Then inequality \hyperlink{2.4}{(2.4)} of   Lemma \hyperlink{l1}{2.1}  applied to $X= \nabla u$ and $Y = \nabla v$,  with \ $V= \mathbb{R}^{\nu}$  and $q = p-2$, \  gives 
$$ \left(|\nabla u|^{p-2}\nabla u  - |\nabla v|^{p-2}\nabla v \right)\cdot \nabla (u- v)  \ge {1\over 2} \left(|\nabla u|^{p-2} + |\nabla v|^{p-2}\right)|\nabla (u - v) |^2 .$$
Then it follows from \hyperlink{2.12}{(2.12)} that
\hypertarget{2.13}{}$$ \int_M \left(|\nabla u|^{p-2} + |\nabla v|^{p-2}\right)|\nabla ( u - v) |^2  \ dg \le $$ 
$$2 \int_M \left| |\nabla u|^{p-2} A(u)(\nabla u, \nabla u) - |\nabla v|^{p-2}A(v)(\nabla v , \nabla v)\right| |u-v|  \  dg  \ .  \eqno (2.13) $$

\medskip

\noindent On the other hand, taking  again a  local orthonormal frame \ $\{ e_j \}_{1\le j\le m}$  on $M$, we  recall that 
$$ A(u)(\nabla u, \nabla u) = \sum_{j= 1}^mA(u)(\nabla_{e_j}u ,  \nabla_{e_j}u )  $$ 
and 
$$A(v)(\nabla v, \nabla v)  = \sum_{j=1}^mA(v)(\nabla_{e_j}v ,  \nabla_{e_j}v ) .$$
Then applying Lemma 2.2 with $y = u , z = v $, and 
  $Y= |\nabla u|^{p-2 \over 2}\nabla_{e_i} u $\ ,  $ Z = |\nabla v|^{p-2 \over 2}\nabla_{e_i} v$,   we have 

 \hypertarget{2.14}{}$$\left| |\nabla u|^{p-2} A(u)(\nabla u, \nabla u) - |\nabla v|^{p-2}A(v)(\nabla v , \nabla v) \right| \le  $$
$$\sum_{i=1}^m\Bigl| |\nabla u|^{p-2} A(u)(\nabla_{e_i} u, \nabla_{e_i} u) - |\nabla v|^{p-2}A(v)(\nabla_{e_i} v , \nabla_{e_i} v) \Bigr| \le$$
$$C\sum_{i=1}^m\left(  |\nabla u|^{p-2} |\nabla_{e_i} u|^2 +  |\nabla v|^{p-2} |\nabla_{e_i} v|^2\right)|u-v|  $$
$$ + \  C \sum_{i=1}^m\left(  |\nabla u|^{p-2\over 2} |\nabla_{e_i} u| +   |\nabla v|^{p-2\over 2} |\nabla_{e_i} v|\right)\left| |\nabla u|^{p-2\over 2} \nabla_{e_i} u -
|\nabla v|^{p-2\over 2} \nabla_{e_i} v\right| . \eqno (2.14)$$
But we have 
$$ \sum_{i=1}^m\left(  |\nabla u|^{p-2} |\nabla_{e_i} u|^2 +  |\nabla v|^{p-2} |\nabla_{e_i} v|^2\right)|u-v|  = \left(|\nabla u|^p + |\nabla v|^p\right)|u-v| $$
and by Cauchy-Schwarz inequality we have 
$$\sum_{i=1}^m\left(  |\nabla u|^{p-2\over 2} |\nabla_{e_i} u| +   |\nabla v|^{p-2\over 2} |\nabla_{e_i} v|\right)\left| |\nabla u|^{p-2\over 2} \nabla_{e_i} u -
|\nabla v|^{p-2\over 2} \nabla_{e_i} v\right|  \le $$
$$\left( \sum_{i=1}^m |\nabla u|^{p-2} |\nabla_{e_i} u|^2\right)^{1/2} \times
  \left( \sum_{i=1}^m\left| |\nabla u|^{p-2\over 2} \nabla_{e_i} u -|\nabla v|^{p-2\over 2} \nabla_{e_i} v  \right|^2\right)^{1/2}  $$
$$ +  \  \left( \sum_{i=1}^m |\nabla v|^{p-2} |\nabla_{e_i} v|^2\right)^{1/2} \times
  \left( \sum_{i=1}^m\left| |\nabla u|^{p-2\over 2} \nabla_{e_i} u -|\nabla v|^{p-2\over 2} \nabla_{e_i} v  \right|^2\right)^{1/2}  $$
  
  \medskip
  
$$= \Bigl( |\nabla u|^{p\over 2} + |\nabla v|^{p\over 2} \Bigr) \left| |\nabla u|^{p-2\over 2} \nabla u -|\nabla v|^{p-2\over 2} \nabla  v \right| .$$

\bigskip

\noindent Hence it follows from \hyperlink{2.14}{(2.14)} that 
 \hypertarget{2.15}{}$$\left| |\nabla u|^{p-2} A(u)(\nabla u, \nabla u) - |\nabla v|^{p-2}A(v)(\nabla v , \nabla v) \right| \le  $$
$$C \Bigl( |\nabla u|^{p} + |\nabla v|^p\Bigr)|u-v|  + C \Bigl( |\nabla u|^{p \over 2} + |\nabla v|^{p\over 2} \Bigr) 
\left| |\nabla u|^{p-2\over 2} \nabla u -|\nabla v|^{p-2\over 2} \nabla  v \right| . \eqno (2.15) $$

\medskip

\noindent By identifying the vectors $\nabla u$ and $\nabla v$ with vectors in $\mathbb{R}^{mn}$, and applying inequality \hyperlink{2.5}{(2.5)} in Lemma \hyperlink{l1}{2.1} with $V= \mathbb{R}^{mn}$, 
$X= \nabla u$ ,  $Y = \nabla v$,  and $q = {p-2 \over 2}$, we have 
$$\left| |\nabla u|^{p-2\over 2} \nabla u -|\nabla v|^{p-2\over 2} \nabla  v \right|  \le {p\over 2}  \Bigl( |\nabla u|^{p - 2 \over 2} + |\nabla v|^{p -2 \over 2}\Bigr)
|\nabla u - \nabla v | .$$ 

\noindent Combining this inequality with \hyperlink{2.15}{(2.15)}, we get 

 \hypertarget{2.16}{}$$\left| |\nabla u|^{p-2} A(u)(\nabla u, \nabla u) - |\nabla v|^{p-2}A(v)(\nabla v , \nabla v) \right| \le$$
$$  C_p \left( |\nabla u|^{p} + |\nabla v|^{p}\right)|u-v| + C_p\left( |\nabla u|^{p-1} + |\nabla v|^{p-1}\right)|\nabla(u-v)|  ,  \eqno (2.16) $$

\medskip

\noindent where $C_p$ is a positive constant depending only on $N$ and $p$.  

\medskip

Putting  \hyperlink{2.16}{(2.16)} in \hyperlink{2.13}{(2.13)}, we obtain 

$$ \int_M \left(|\nabla u|^{p-2} + |\nabla v|^{p-2}\right)|\nabla ( u - v) |^2  \ dg \le    C_p\int_M \left( |\nabla u|^{p} + |\nabla v|^{p}\right)|u-v|^2 \ dg ,$$
$$ + \  C_p\int_M\left( |\nabla u|^{p-1} + |\nabla v|^{p-1}\right)|\nabla(u-v)| |u-v|  \ dg  $$

\medskip

\noindent which by the Cauchy-Schwarz inequality gives 

\bigskip

 \hypertarget{2.17}{}$$ \int_M \left(|\nabla u|^{p-2} + |\nabla v|^{p-2}\right)|\nabla ( u - v) |^2  \ dg \le C_p \int_M \left( |\nabla u|^{p} + |\nabla v|^{p}\right)|u-v|^2 \ dg  $$
$$ +  \  C_p \left(  \int_M\left(|\nabla u|^{p-2} + |\nabla v|^{p-2}\right)|\nabla (u-v)|^2  dg\right)^{1/2}$$
$$\times \left( \int_M\left(|\nabla u|^{p}+ |\nabla v|^{p} \right) |u-v|^2 dg\right)^{1/2}  . \eqno (2.17) $$

\medskip

Now,  if the constant $\varepsilon_0$ in Theorem \hyperlink{t2}{1.1} is sufficiently small, then the geodesic ball $B(P_0 , \varepsilon_1)$ in $N$  is included in the Euclidean ball $B(P_0, r)$ in $\mathbb{R}^k$ with $r$  as in Proposition \hyperlink{p1}{2.1}.  Applying Proposition \hyperlink{p1}{2.1} to both $u$ and $v$ with $\varphi = u-v  \in W^{1, p}_0(M, \mathbb{R}^k)$,  and taking the sum,  we have 
$$\int_M \left(|\nabla u|^{p} + |\nabla v|^{p}\right)|u-v|^2 \ dg \le $$
$$ 16 r^2 \int_M \left(|\nabla u|^{p-2} + |\nabla v|^{p-2}\right)|\nabla ( u - v) |^2  \ dg .$$ 
Combining this inequality with \hyperlink{2.17}{(2.17}), we finally obtain 
$$ \int_M \left(|\nabla u|^{p-2} + |\nabla v|^{p-2}\right)|\nabla ( u - v) |^2  \ dg \le $$ 
$$  C_p \left( 16r^2 + 4r  \right)   \int_M \left(|\nabla u|^{p-2} + |\nabla v|^{p-2}\right)|\nabla ( u - v) |^2  \ dg \ .$$
Thus if $r $ is small  enough,   we obtain $\nabla u- \nabla v = 0$, and then $u= v$ since $u= v$ on $\partial M$. This proves Theorem \hyperlink{t2}{1.1}. 
\end{proof}

\bigskip

\hypertarget{s3}{} \section{Energy minimizing maps with small range }

\bigskip

Our goal in this section is the proof of Theorem \hyperlink{t1}{1.2}. To this end, we need  the following  proposition on the existence of a minimizing  $p$-harmonic map,  agreeing with $u$ on the boundary of the domain,  and having its image in the same geodesic ball as $u$.  In what follows, we set 
\hypertarget{3.1}{}$$r_N = \inf(i_N,  {\pi \over 2 \kappa}) ,  \eqno (3.1)$$
where \ $i_N$ is the injectivity radius of the manifold $N$ and $\kappa \ge 0$ is an upper bound of the sectional curvature of $N$. 

\bigskip

\hypertarget{p2}{}\newtheorem{pro}{Proposition}[section]  \begin{pro}  Let \  $  0 < r  <  r_N$,  and let $u \in W^{1, p}(D , N)$ satisfying \  $ u(D) \subset B(P_0 , r)$,  where  $D \subset M$ is an open set with smooth boundary $\partial D$,  and $B(P_0 , r)$ is a  geodesic ball in $N$ of radius $r$,  centered at some point $P_0 \in N$. Then there exists a  $p$-harmonic map  \  $ v \in  W^{1, p}(D , N)$ such that  \  $ v= u $ on $ \partial D$,  $v(D) \subset B(P_0 , r) $ \ and satisfying 
$$ \int_D |\nabla v |^p dg  \le \int_D  |\nabla w |^p dg  $$
for all \   $ w \in   W^{1, p}(D , N) $ \  with  \  $ w(D) \subset B(P_0 , r) $   and    $w = u$  on   $ \partial D$ .     \end{pro}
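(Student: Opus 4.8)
The plan is to obtain $v$ by the direct method of the calculus of variations, minimizing $E_p$ over the constrained class in which the image lies in the \emph{closed} ball $\overline{B(P_0,r)}$, and then to exploit the geodesic convexity of the ball, which is guaranteed precisely by the hypothesis $r<r_N$ (see \hyperlink{3.1}{(3.1)}), both to show that the minimizer solves the $p$-harmonic map system \hyperlink{1.2}{(1.2)} and to confine its image to the \emph{open} ball. First I would introduce the admissible class
$$\mathcal{A}=\left\{\, w\in W^{1,p}(D,N)\ :\ w(D)\subset\overline{B(P_0,r)},\ w-u\in W^{1,p}_0(D,\mathbb{R}^k)\,\right\},$$
which is nonempty since $u\in\mathcal{A}$, and take a minimizing sequence $(w_j)\subset\mathcal{A}$ for $E_p$. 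Since the images lie in the bounded set $\overline{B(P_0,r)}$ the sequence is bounded in $L^\infty$, and as $E_p(w_j)$ is bounded the gradients are bounded in $L^p$; hence $(w_j)$ is bounded in $W^{1,p}$. Because $p\ge2>1$ the space $W^{1,p}$ is reflexive, so after passing to a subsequence $w_j\rightharpoonup v$ weakly in $W^{1,p}$, and by Rellich--Kondrachov $w_j\to v$ in $L^p$ and a.e.\ along a further subsequence. The a.e.\ limit satisfies $v(x)\in\overline{B(P_0,r)}\cap N$ a.e.; the boundary condition passes to the limit since $W^{1,p}_0$ is weakly closed, so $v-u\in W^{1,p}_0$; and the convexity of $\xi\mapsto|\xi|^p$ gives the lower semicontinuity $E_p(v)\le\liminf_j E_p(w_j)$. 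Thus $v\in\mathcal{A}$ minimizes $E_p$ over $\mathcal{A}$.

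The next step is to upgrade this constrained minimizer to a genuine $p$-harmonic map. Here I would invoke the geometry: since $r<r_N=\inf(i_N,\pi/2\kappa)$, the ball $\overline{B(P_0,r)}$ is geodesically convex, and the nearest-point projection furnishes a $1$-Lipschitz retraction $\rho$ of a neighborhood $U$ of $\overline{B(P_0,r)}$ in $N$ onto $\overline{B(P_0,r)}$. Now fix $\varphi\in C_0^\infty(D,\mathbb{R}^k)$. For $|t|$ small the map $\Pi_N(v+t\varphi)$ is well defined and takes values in $U$ (it converges uniformly to $v$ as $t\to0$, and $v(D)\subset\overline{B(P_0,r)}$), so $w_t:=\rho\circ\Pi_N(v+t\varphi)$ lies in $\mathcal{A}$ and, by the $1$-Lipschitz property, $|\nabla w_t|\le|\nabla\Pi_N(v+t\varphi)|$ a.e., whence $E_p(w_t)\le E_p(\Pi_N(v+t\varphi))$. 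Minimality of $v$ then gives $E_p(v)\le E_p(w_t)\le E_p(\Pi_N(v+t\varphi))$ for all small $t$, so $t\mapsto E_p(\Pi_N(v+t\varphi))$ has a minimum at $t=0$. Differentiating at $t=0$, exactly as in the derivation of \hyperlink{1.2}{(1.2)}, yields the weak form of \hyperlink{1.2}{(1.2)}; hence $v$ is $p$-harmonic.

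Finally I would confine the image to the open ball. Using $r<r_N$ once more, there is a strictly convex function $\psi$ of the geodesic distance to $P_0$ on $\overline{B(P_0,r)}$, with Hessian bounded below; since $v$ is now known to solve \hyperlink{1.2}{(1.2)}, composition with $\psi$ shows that $\psi\circ v$ is a weak subsolution of the degenerate equation $-\mathrm{div}\!\left(|\nabla v|^{p-2}\nabla(\psi\circ v)\right)\le0$. The maximum principle then forces $\psi\circ v$ to attain its supremum on $\partial D$, where $v=u$ takes values in the open ball, giving $v(D)\subset B(P_0,r)$. Since the competitor class with image in $B(P_0,r)$ is contained in $\mathcal{A}$ and $v$ belongs to it while minimizing over the larger class $\mathcal{A}$, it follows that $v$ minimizes $E_p$ among all such competitors, which is the assertion of the proposition. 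I expect the principal obstacle to lie in this last pair of geometric-analytic inputs: establishing the $1$-Lipschitz retraction onto the convex ball (used to free the variations and obtain the equation) and the maximum principle for the degenerate $p$-Laplacian applied to $\psi\circ v$ (used to reach the open ball), both of which depend in an essential way on the curvature-and-injectivity bound $r<r_N$.
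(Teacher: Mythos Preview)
Your outline is coherent and would succeed if the two inputs you yourself flag—the $1$-Lipschitz nearest-point retraction onto $\overline{B(P_0,r)}$ and the weak maximum principle for the degenerate linear operator $w\mapsto -\mathrm{div}(|\nabla v|^{p-2}\nabla w)$—are supplied; both are plausible under $r<r_N$ but neither is elementary, so as written these are genuine gaps. The paper's proof (following Hildebrandt--Kaul--Widman) avoids both by one device you did not use: it fixes an intermediate radius $r<r_1<r_N$ and minimizes over maps with values in the \emph{larger} closed ball $\overline{B(P_0,r_1)}$, and it also reverses the order of your last two steps. Working in normal coordinates about $P_0$, it first applies the one-sided radial variation $v_t=\exp_{P_0}\bigl((1-t\eta)v\bigr)$ with $\eta=\max(|v|^2-r^2,0)\ge 0$; this competitor always stays in $\overline{B(P_0,r_1)}$, so minimality alone (no equation, no retraction) yields an inequality which, combined with the pointwise bound $|\nabla v|^2-\Gamma^{j}_{kl}(v)v^j\nabla v^k\nabla v^l\ge 0$ valid for $|v|<r_N$ (this is the convexity of the squared distance, \cite{sH84}), forces $\nabla\eta=0$ and hence $|v|\le r$. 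Only then does the paper take two-sided variations $v_t=\exp_{P_0}(v+t\varphi)$: the buffer $r_1-r>0$ guarantees $v_t(D)\subset B(P_0,r_1)$ for small $|t|$, so minimality gives the Euler--Lagrange system~\hyperlink{3.2}{(3.2)} directly, with no retraction needed.

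In summary, both routes are viable, but the paper trades your two nontrivial geometric--analytic ingredients for a single comparison inequality and an enlarged-ball buffer, and it derives the confinement \emph{before} the equation rather than after. If you retain your order you must justify the $1$-Lipschitz projection onto a convex ball in positive curvature (a CAT$(\kappa)$-type fact) and the degenerate maximum principle; the paper's argument is self-contained by comparison.
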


\bigskip

\begin{proof}  The proof  is an adaptation of a similar result by   S. Hildebrandt {\it et al} \cite{sH77} in the harmonic case ($p=2$). The existence of $v$ relies on classical variational arguments, so the  main difficulty is to prove that $v$ is $p$-harmonic. Fix $r_1 $ such that  $r < r_1 < r_N$. Since the functional  $E_p$ is bounded from below, there is a minimizing sequence  $(v_l) $ of the $p$-energy   in $ W^{1, p}(D , N) $ \  such that   \  $ v_l(D) \subset \overline{B(P_0 , r_1)} $   and    $v_l = u$  on   $ \partial D$ .  Up to a subsequence, we may suppose that $(v_l)$ converges weakly  in $ W^{1, p}(D , \mathbb{R}^k) $ and strongly in $L^p(D, \mathbb{R}^k)$   to  some $v \in  W^{1, p}(D , N)$,  with  \  $ v(D) \subset \overline{B(P_0 , r_1)} $   and    $v = u$  on   $ \partial D$. Moreover, by lower semi-continuity of the $p$-energy functional, we have 
$$ \int_D |\nabla v |^p dg \le \liminf_{l\to \infty}  \int_D |\nabla v_l |^p dg .$$

\noindent This proves that $v$ is minimizing the $p$-energy  on $D$ among maps having their values in $\overline{B(P_0 , r_1)}$ and agreeing with $u$ on $\partial D$.  It remains to show  that  $ v(D) \subset B(P_0 , r)$ and   that $v$ is $p$-harmonic in $D$. We follow the arguments of   \cite{sH77}, and we shall prove that $v$ satisfies the system \hyperlink{1.1}{(1.1)} (in the weak sense ) in normal coordinates around $P_0$. 
 Let   $v = (v^1 , \cdots , v^n )$  be the expression of $v$ in  such normal coordinates. Then  we have to prove that, for any  $ \varphi = (\varphi^1 , \cdots, \varphi^n) \in W^{1, p}_0(D , \mathbb{R}^n)\cap L^{\infty}(D, \mathbb{R}^n)$, 
\hypertarget{3.2}{}$$\int_D |dv|^{p-2}\nabla v^i\nabla\varphi^i \ dg = \int_D |dv|^{p-2}\Gamma^{j}_{kl}(v)\varphi^j\nabla v^k\nabla v^l \ dg .  \eqno (3. 2)   $$
 First let us prove that \ $v(D) \subset  B(P_0 , r)$, that's, \ $|v| < r $  (recall that we are working in  normal coordinates around $P_0$). 
Let $0  \le \eta \in W^{1 , p}_0(D , \mathbb{R})\cap L^{\infty}(D, \mathbb{R})$ and consider the function $v_t = \exp_{P_0}\left((1 - t \eta )v\right)$,  with $ 0 \le t  \le \|\eta\|_{\infty}^{-1}$.   Then  $v_t \in    W^{1, p}(D , N)$, $v_t = u$ on  $\partial D$,   $v_t(D) \subset \overline{B(P_0, r_1)}$, and $v_0 = v$.  Hence, since $v$ is minimizing, we have $E_p(v) \le E_p(v_t)$ for any $ 0 \le t \le  \|\eta\|_{\infty}^{-1}$.   If we take the derivative  with respect to $t$ at $t = 0$, we get 
$${d \over dt}E_p(v_t)\Big|_{t=0} \ \ge 0 \  , $$
which gives (after some computations) 
$$\int_D |dv|^{p-2}\nabla v^i\nabla(\eta v^i) \ dg - \int_D \eta |dv|^{p-2}\Gamma^{j}_{kl}(v)v^j\nabla v^k\nabla v^l  \ dg  \  \le \  0 \  .  $$

Now  if we choose \ $ \eta = \hbox{max}(|v|^2 - r^2 , 0)$, then it is easy to see that $ \eta  \in W^{1 , p}_0(D , \mathbb{R})\cap L^{\infty}(D, \mathbb{R})$ and that $\nabla\eta = 2 \nabla v\cdot v $ \   if  $  |v| \ge r $ and 
                        $\nabla \eta =  0   $ \ if  $ |v| \le r$.   Replacing $\eta$ in the last inequality, we obtain 
$${1\over 2} \int_D |dv|^{p-2}|\nabla \eta|^2 dg+  \int_D\eta |dv|^{p-2}\left( |\nabla v|^2 -  \Gamma^{j}_{kl}(v)v^j\nabla v^k\nabla v^l \right) dg \le 0.$$
Since $|v| \le r_1$  and $r_1 < r_N$, we have, according to a result proved in \cite{sH84} ( inequality (6.11)  ), 
$$ |\nabla v|^2 -  \Gamma^{j}_{kl}(v)v^j \nabla v^k\nabla v^l  \ \ge 0 .$$
Thus we obtain
$$\int_D |dv|^{p-2}|\nabla \eta|^2  dg \le 0$$
which gives   \  $\nabla \eta = 0$, that's,  $ |v| < r$.  It remains then to prove \hyperlink{3.2}{(3.2)}. Let $ \varphi \in W^{1, p}_0(D , \mathbb{R}^n)\cap L^{\infty}(D, \mathbb{R}^n)$, and consider the map $v_t = \exp_{P_0}\left(v + t\varphi\right) $. Then since $|v| <  r < r_1$, we have  \ $ v_t(D) \subset B(P_0 , r_1)$, for  any  \  $ t \in (-\delta , \delta )$, \ with  \ $\delta = (r_1 -r)\|\varphi\|_{\infty}^{-1}> 0$, and $v_t = u$ on $\partial D$. 
Since $v$ is minimizing the $p$-energy functional among $W^{1, p}$ maps having their values in the ball $\overline{B(P_0 , r_1)}$ and agreeing with $u$ on $\partial D$, we have \ $E_p(v) \le E_p(v_t)$ for any $t \in (-\delta , \delta )$. By taking the derivative with respect to $t$ of $E_p(v_t)$ at $t = 0$, we obtain 
$${d \over dt}E_p(v_t)\Big|_{t=0} = 0$$
which gives \hyperlink{3.2}{(3.2)}.  This achieves the proof of Proposition \hyperlink{p2}{3.1}. 
\end{proof}

\bigskip

\begin{proof}[Proof of Theorem \hyperlink{t1}{1.2}]   Let $\varepsilon_0$ as in Theorem \hyperlink{t2}{1.1} and  let $\varepsilon_1 > 0$ such that 
$$\varepsilon_1 <   \inf\left(i_N , {\pi \over 4 \kappa} ,  \varepsilon_0\right)$$
where \ $i_N$ is the injectivity radius of the manifold $N$ and $\kappa \ge 0$ is an upper bound of the sectional curvature of $N$. 
 Let $u$ as in Theorem \hyperlink{t1}{1.2}   such that $u(\Omega) \subset B(P_0 , \varepsilon_1)$, and  let $D \subset \Omega $ a domain with smooth boundary $\partial D$. By Proposition \hyperlink{p2}{3.1}, with $r= \varepsilon_1$, there is a $p$-harmonic  map $v \in W^{1 , p}(D , N)$  such that  $ v= u $ on $ \partial D$,   $v(D) \subset B(P_0 , \varepsilon_1) $ \ and satisfying 
$$ \int_D |\nabla v |^p dg  \le \int_D  |\nabla w |^p dg $$
for any  \   $ w \in   W^{1, p}(D , N) $ \  with  \  $ w(D) \subset B(P_0 , \varepsilon_1) $   and    $w = u$  on   $ \partial D$. 
Since $v$ is minimizing the $p$-energy, then it is easy to see that $v$ is a stationary $p$-harmonic map.  It follows from the regularity result of M. Fuchs \cite{mF89} for  stationary $p$-harmonic maps with image in geodesic balls of radius \ $ r <  \inf(i_N , {\pi \over 4 \kappa} )$,  that $v$ is in $C^{1, \alpha}(D, N)$ for some  $0< \alpha < 1$. But by  Theorem \hyperlink{t2}{1.1}    we have \ $u = v$ since we are supposing $\varepsilon_1 < \inf(i_N , {\pi \over 4 \kappa} ,  \varepsilon_0)  \le  \varepsilon_0 $. This achieves the proof of Theorem \hyperlink{t1}{1.2}. 

\end{proof}


\end{document}